\newtheorem{theorem}{Theorem}[section]
\newtheorem*{theorem*}{Theorem}
\newtheorem{proposition}[theorem]{Proposition}
\newtheorem{definition}[theorem]{Definition}
\theoremstyle{remark}
\newtheorem{example}[theorem]{Example}
\def\K{\mathbb K}
\def\Z{\mathbb Z}
\title[On the concept of strict convexity]{A precision on the concept of strict convexity in non-Archimedean analysis} 
\begin{document}

\author
{Javier~Cabello~Sánchez, José~Navarro~Garmendia*}
\address{*Corresponding author: Departamento de Matem\'{a}ticas, Universidad de Extremadura, 
Avenida de Elvas s/n, 06006; Badajoz. Spain}
\email{coco@unex.es, navarrogarmendia@unex.es}

\thanks{Keywords: Mazur--Ulam Theorem; non-Archimedean normed spaces; 
strict convexity}
\thanks{Mathematics Subject Classification: 46S10, 26E30}

\begin{abstract} We prove that the only non-Archimedean strictly convex spaces are the zero space and the one-dimensional linear space over $\, \Z/3\Z$, with any of its trivial norms.
\end{abstract}

\maketitle

\section{Introduction}

In order to find a non-Archimedean version of the Mazur-Ulam theorem, M. Moslehian and G. Sadeghi introduced in \cite{Moslehian1} the class of non-Archimedean strictly convex spaces.

Later on, A. Kubzdela observed that the only non-Archimedean strictly convex space over a field with a non-trivial valuation is the zero space (\cite[Theorem 2]{KubzdelaUno}). 

In this note, we prove:

\medskip
{\sc Proposition \ref{MenosUnoMasDos}} 
{\em The only non-Archimedean strictly convex spaces are the zero space and the one-dimensional linear space over $\, \Z/3\Z$, with any of its trivial norms.}
\medskip

\section{Non-Archimedean strictly convex spaces}

Firstly, recall that on a non-Archimedean normed space $X$, ``any triangle is isosceles"; 
that is to say, for any $x, y \in X$, 
$$ \| y \| < \| x \| \quad \Rightarrow \quad \| x + y \| = \| x \| \ . $$

\begin{definition}[\cite{Moslehian1}]\label{defnans} 
A non-Archimedean normed space $X$ over a field $\K$ is strictly convex if
\begin{enumerate}
\item[(SC1)] $|2|=1$. 
\item[(SC2)] for any pair of vectors $x,y \in X$, $\|x\|=\|y\|=\|x+y\|$ implies $x=y$.
\end{enumerate}
\end{definition}

Observe that (SC2) may also be rephrased by saying that there are no equilateral triangles; that is to say, that for any pair of distinct vectors $x \neq y \in X$, 
$$ \| y \| = \| x \| \quad \Rightarrow \quad \|x+y \| < \| x \|\ .$$

\begin{example} If $X$ is a one-dimensional normed linear space over 
a finite field, then its norm is trivial; i.e., there exists 
$a \in (0, \infty)$ such that $\|x\|= a $ for every nonzero vector $x \in 
X$. 

The one-dimensional linear space over $\Z / 3 \Z$, with any of its possible trivial norms, is a strictly convex space, in the sense of the Definition above.
\end{example}

\begin{proposition}\label{MenosUnoMasDos}
If $X$ is a non-zero strictly convex space, then it is linearly isometric to a one-dimensional normed space over $\, \Z/3\Z$. 
\end{proposition}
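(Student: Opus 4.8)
The plan is to prove two rigidity statements in turn: first, that in any non-zero strictly convex space every nonzero vector has the same norm; and then, that both $\dim_{\K} X$ and the cardinality of $\K$ are forced to be as small as possible, namely $1$ and $3$.

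For the first step, note that (SC1) gives $|2| = 1$, so in particular $2 \neq 0$ in $\K$ and $\operatorname{char}\K \neq 2$. Suppose, for contradiction, that there are nonzero $x, y \in X$ with $\|x\| < \|y\|$. Applying the isosceles-triangle property to $y$ together with $x$, and then with $-x$, gives $\|x + y\| = \|y\| = \|-x+y\|$. Since $\operatorname{char}\K\neq 2$ and $x \neq 0$, we have $x + y \neq -x + y$; these two vectors have equal norm $\|y\|$, so the rephrased form of (SC2) yields $\|(x+y)+(-x+y)\| < \|y\|$. But $(x+y)+(-x+y) = 2y$ and $\|2y\| = |2|\,\|y\| = \|y\|$, a contradiction. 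Hence there is $a \in (0,\infty)$ with $\|x\| = a$ for every nonzero $x \in X$.

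For the second step, if $x, y \in X$ were linearly independent then $x \neq y$ and $x + y \neq 0$, so $\|x\| = \|y\| = \|x+y\| = a$, again contradicting (SC2); therefore $X$ is one-dimensional, say $X = \K e$. Now $\|\lambda e\| = a = \|e\|$ for every $\lambda \in \K^{\times}$ forces $|\lambda| = 1$, i.e. the valuation of $\K$ is trivial (this recovers, and slightly extends, Kubzdela's observation). Finally, for any $\lambda \in \K$ with $\lambda \notin \{0,1\}$ we have $e \neq \lambda e$ and $\|e\| = \|\lambda e\|$, so (SC2) gives $|1+\lambda|\,\|e\| = \|e + \lambda e\| < \|e\|$, and triviality of the valuation forces $1 + \lambda = 0$. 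Thus $\K = \{0, 1, -1\}$, which (as $\operatorname{char}\K \neq 2$) has exactly three elements, so $\K \cong \Z/3\Z$; and $X = \K e$ with its trivial norm is precisely a one-dimensional normed space over $\Z/3\Z$.

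The only genuine choice in the argument is the first step: recognising that feeding the pair $x+y$, $-x+y$ into (SC2) converts ``no equilateral triangles'' into the much stronger statement that all norms agree. Once that is in hand the rest is essentially forced, the one thing to watch being the characteristic, so that $2y$ is nonzero and $x+y \neq -x+y$. I would also keep an eye on degenerate readings of ``non-Archimedean normed space'' (in particular whether the trivial valuation on $\K$ is permitted), but these do not affect the argument above.
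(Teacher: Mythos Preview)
Your proof is correct, and its decomposition differs from the paper's. The paper first proves $\operatorname{char}\K=3$ directly, by applying (SC2) to the pair $2x,\,-x$ (whose sum is $x$), and then shows that any two nonzero vectors $x,y$ satisfy $y=\pm x$ via a case split on whether $\|y\|<\|x\|$ or $\|y\|=\|x\|$; the conclusion $X=\{0,x,-x\}$ then forces $|\K|=3$. You instead isolate the statement that the norm is constant on $X\setminus\{0\}$ as a separate lemma---using essentially the same pair $y\pm x$ as in the paper's unequal-norm case---and then deduce $\dim_{\K}X=1$, triviality of the valuation, and $\K=\{0,1,-1\}$ in three short steps. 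Your route is arguably more modular: once all norms agree, each remaining step is a one-line application of (SC2), whereas the paper's equal-norm case needs the additional ultrametric estimate $\|(x+y)+(x-y)\|\leq\max\{\|x+y\|,\|x-y\|\}$. Conversely, the paper's opening observation that $3=0$ in $\K$ is a pleasant shortcut that would let you skip your final step entirely.
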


\begin{proof} First of all, observe that a strictly 
convex space can only occur in characteristic 3: for any vector $x\in X$,
$$\|2x\|=\|x\|=\|-x\| \, ;$$
as $2x+(-x)=x$, condition (SC2) implies that $2x=-x$; that is to say, $3x =0$ for any vector $x$, and we conclude that $3=0$ in $\K$. 

Now suppose there are two non-zero vectors $x,y\in X$ such that 
$y\neq \pm x$ and we will arrive to a contradiction.

Condition (SC1) implies that the characteristic of $\K$ is not 2, and, hence, $x+y \neq x-y$. Without loss of generality we may also assume that 
$$\|y\|\leq\|x\| \quad \mbox{ and } \quad \|x-y\|\leq\|x+y\| \ . $$

If $\|y \|  < \| x \| $, then $x+ y$ and $x-y$ are distinct vectors with the same norm, $\| x + y \| = \| x \| = \| x-y \|$, and whose sum mantains the norm
$$\|(x+y) + (x-y)\| = \|2x\|= \|x\| = \| x + y \|   \ , $$ in contradiction with (SC2).


If $\|y \| = \| x \| $, then (SC2) implies the absurd chains of inequalities 
$$\|x+y\| < \|x\| = \|2x\| = \|(x+y) + (x-y)\| \leq \max\{\|x+y\|, \|x-y\|\}\ , $$
$$\|x-y\| < \|x\| = \|2x\| = \|(x+y) + (x-y)\| \leq \max\{\|x+y\|, \|x-y\|\}\ . $$

\end{proof}

\section*{Acknowledgments}
{Supported in part by DGICYT projects MTM2016-76958-C2-1-P and PID2019-103961GB-C21 (Spain), ERDF funds and Junta de Extremadura programs GR-15152, IB-16056 and IB-18087.}

\bibliographystyle{plain}

\begin{thebibliography}{w}

\bibitem{KubzdelaUno}
{\sc Kubzdela, A.} 
{\em Isometries, Mazur-Ulam theorem and Alexsandrov problem for non-Archimedean normed spaces.}
Nonlinear Anal. {\bf 75}, 2012, 2060--2068.
\url{https://doi:10.1016/j.na.2011.10.006}

\bibitem{Moslehian1}
{\sc Moslehian, M. S.} and {\sc Sadeghi, G.}
{\em A Mazur-Ulam theorem in non-Archimedean normed spaces. }
Nonlinear Anal. {\bf 69(10)}, 2008, 3405--3408.

\end{thebibliography}

\end{document}